\UseRawInputEncoding
\documentclass[a4paper, 12pt,reqno]{amsart}
\usepackage{amsmath,amssymb,amsthm,enumerate}
\flushbottom
\allowdisplaybreaks
\theoremstyle{plain}
\newtheorem{theorem}{Theorem}
\newtheorem*{theorem*}{Theorem}
\newtheorem{lemma}{Lemma}
\newtheorem*{lemma*}{Lemma}
\theoremstyle{definition}

\newtheorem*{definition*}{Definition}
\theoremstyle{remark}

\newtheorem{example}{Example}
\newtheorem{statement}{Statement}
\newtheorem*{remark*}{Remark}

\newtheorem*{statement*}{Statement}
\newtheorem{corollary}{Corollary}
\frenchspacing \righthyphenmin=2 \emergencystretch=5pt
\hfuzz=0.5pt \tolerance=400 \oddsidemargin=-3mm \evensidemargin=-3mm 
\textwidth=175mm \textheight=260mm
\topmargin=-15mm

\begin{document}
\title[Applications of probability distributions on positive integers]{Applications of  certain probability distributions on positive integers}

\author{Symon Serbenyuk}

\subjclass[2010]{11K55, 11J72, 26A27, 11B34,  39B22, 39B72, 26A30, 11B34.}

\keywords{ Probability distribution, continuous function, singuar function, nowhere monotonic function, systems of functional equations, infinite derivative}

\maketitle
\text{\emph{simon6@ukr.net}}\\
\text{\emph{Kharkiv National University of Internal Affairs,}}\\
\text{\emph{L.~Landau avenue, 27, Kharkiv, 61080, Ukraine}}
\begin{abstract}

The main goal of this research is to model and investigate generalizations of functions from  \cite{Symon2024}. Arguments of modeled functions are presented by the representation $\pi_{\mathfrak p}$ from \cite{JN2022}. 

\end{abstract}

\section{Introduction}

Mathematical objects having  a complicated local structure are also called pathological (or ``mathematical monsters") can be characterized by a number of applications of them.  In real analysis,  fractal sets (\cite{Bunde1994, Falconer1997, Falconer2004, Mandelbrot1977, Mandelbrot1999, Moran1946, Symon21, Symon21-1, Symon2021} and references therein), as well as  singular or  some their generalizations (for example, \cite{{Salem1943}, {Zamfirescu1981}, {Minkowski}, {S.Serbenyuk 2017}, 2, 3, 4, 11}), nowhere monotonic, and non-differentiable functions (for example, see \cite{{Bush1952}}, etc.)., are such objects. The interest in the mentioned classes of functions can be explained by their connection  with fractals and fractal multiformalism (for these studies, the motivation  is given in \cite{ALSW2024, ALSW2024a, AS2021, CLS2024, DS2020, D2021, DS2023, DSM2021, Selmi2021, 8,16,  Hensley, Hirst, 2019, 10, 16}, etc.), as well as by the fact that such functions are an important tool for modeling real objects, processes, and phenomena (in physics, chemistry, and biology, as well economics, technology, etc.) and for research  different areas of mathematics (for example, see~\cite{BK2000, ACFS2011, Kruppel2009, OSS1995,    Symon21, Symon21-1, Sumi2009, Takayasu1984, TAS1993, Symon2021}).

Modelling  functions with the complicated local structure (including modeling the simplest examples of such functions) was initiated by the classics of mathematics and was continued in the research of their followers. One can note that  various examples of such classes of functions appeared in the scientific results of Bolzano,  Cantor, Darboux, Dini, Minkowski, Riemann, Salem, Weierstrass  and other scientists.

\section{Definition of the object}

Suppose $\mathbb N$ is the set of all positive integers and $(n_j) \in \mathbb N^{\mathbb N}$, $j=1, 2, 3, \dots$. 

 In~\cite{JN2022}, the following expansion of real numbers from $[0,1)$ was presented:
\begin{equation}
\label{eq}
\widehat{p_{n_1}}+\sum^{\infty} _{j=1}{\left(\widehat{p_{n_{j+1}}}p_{n_1}p_{n_2} \cdots p_{n_j}\right)}:=\pi_{\mathfrak p}((n_j))=\pi_{\mathfrak p} (n_1, n_2, \dots, n_j, \dots),
\end{equation} 
where $\mathfrak p :=(p_j)_{j\in\mathbb N}$ is a probability distribution supported by $\mathbb N$, and for all positive integers $k$, the  conditions  $\widehat{p_{1}}=0$, $n_j\in\mathbb N$, and $p_{n_j}\in (0,1)$, as well as 
$$
\sum^{\infty} _{n_j=1}{p_{n_j}}=1 ~~~~~\text{and}~~~~\widehat{p_{n_j}}=\sum^{n_j-1} _{j=1}{p_{j}}~~~\text{(for}~n_j>1)
$$
hold.

One can note that the mentioned expansion is a certain generalization of expansions of values of the Salem function (the Salem function was introduced in \cite{Salem1943}). 

Let us note the following auxiliary properties of this expansion (\cite{JN2022}).

\begin{statement}{\cite[p. 2]{JN2022}}
For all probability distributions $\mathfrak p$ supported by $\mathbb N$, the map $\pi_{\mathfrak p}: \mathbb N^{\mathbb N}\to [0,1)$ is:
\begin{itemize}
\item  a bijection;
\item continuous.
\end{itemize}
\end{statement}

The main goal of this research is to model and investigate generalizations of functions from  \cite{Symon2024}. Arguments of modeled functions are presented in terms of $\pi_{\mathfrak p}$. 

Suppose we have fixed probability vectors
$$
\mathfrak p := (p_1, p_2, p_3, \dots , p_j ,\dots ) ~~~\text{and}~~~ \mathfrak o :=(o_1, o_2, o_3, \dots , o_j ,\dots ) 
$$
with $p_j, o_j \in (0, 1)$ for all positive integers $j$. Suppose $\phi : \mathbb N \to \mathbb N$ is a fixed map with $\phi (i) \ne \phi (j)$ for any $i\ne j$ and $\{\phi (n): n \in \mathbb N\} =\mathbb N$. Also, for simplifications of notations, let us denote: 
$$
\phi (n_j)=m_j.
$$
Then let us consider a function of the form:
$$
G: \pi_{\mathfrak p}((n_j)) \to \pi_{\mathfrak o}((m_j)).
$$
That is, 
$$
x=\pi_{\mathfrak p}((n_j))=\widehat{p_{n_1}}+ \sum^{\infty} _{j=1}{\left(\widehat{p_{n_{j+1}}}p_{n_1}p_{n_2} \cdots p_{n_j}\right)}  \stackrel{G}{\rightarrow} \widehat{o_{m_1}}+ \sum^{\infty} _{j=1}{\left(\widehat{o_{m_{j+1}}}o_{m_1}o_{m_2} \cdots o_{m_j}\right)}=\pi_{\mathfrak o}((m_j))=G(x)=y.
$$

\begin{example} 
It is easy to see that 
$$
\mathbb N =(1, 2, 3, \dots , j, \dots )~~~\text{and}~~~\phi(\mathbb N)=(\phi(1),\phi(2), \phi(3), \dots , \phi(j), \dots ).
$$

The set of all functions $G$ contains the function $y=x$, where $\phi(n_j)=n_j$. In addition, one can note the second example of $\phi$: 
$$
\phi(n_j)=\begin{cases}
n_j+1 &\text{whenever $n_j$ is odd}\\
n_j-1 &\text{whenever $n_j$ is even.}
\end{cases}
$$
\end{example}

 For  simplifications of notations, one can use an auxiliary notion of the shift operator $\sigma$. That is, for $x=\pi_P((i_k))$, let us define the following
$$
\sigma(x)= \pi_{\mathfrak p}((n_j)\setminus\{n_1\})= \pi_P(n_2, n_3, \dots ).
$$
In the other words, we have
$$
\sigma(x)=\widehat{p_{n_2}}+\sum^{\infty} _{j=2}{\left(\widehat{p_{n_{j+1}}}p_{n_2}p_{n_3} \cdots p_{n_j}\right)}
$$
and
$$
x=\widehat{p_{n_1}}+p_{n_1}\sigma(x).
$$
By analogy, one can write
$$
\sigma^k(x)=\widehat{p_{n_{k+1}}}+\sum^{\infty} _{j=k+1}{\left(\widehat{p_{n_{j+1}}}p_{n_{k+1}}p_{i_{k+2}} \cdots p_{n_j}\right)}
$$
and
$$
\sigma^k(x)=\widehat{p_{n_{k+1}}}+p_{n_{k+1}}\sigma^{k+1}(x)
$$
for $n=0, 1, 2, 3, \dots$, where $\sigma^0(x)=x$.

\section{Properties of functions}

\begin{lemma}
Any function  $G$  has the following properties:
\begin{enumerate}
\item $G$ maps the interval $[0,1)$ into $[0,1)$;
\item the function $G$ is  bijective on the domain;
\item $G$ is not a monotonic function on the domain;
\item $G$ is a continuous on the domain. 
\end{enumerate}
\end{lemma}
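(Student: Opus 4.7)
The plan is to factor $G$ as $G = \pi_{\mathfrak o} \circ \Phi \circ \pi_{\mathfrak p}^{-1}$, where $\Phi \colon \mathbb N^{\mathbb N} \to \mathbb N^{\mathbb N}$ applies $\phi$ coordinatewise. Since $\phi$ is a bijection of $\mathbb N$, the map $\Phi$ is a bijection of $\mathbb N^{\mathbb N}$, and by the cited Statement both $\pi_{\mathfrak p}$ and $\pi_{\mathfrak o}$ are continuous bijections $\mathbb N^{\mathbb N} \to [0,1)$. Items (1) and (2) drop out at once from this factorization: the image of $[0,1)$ runs through $\mathbb N^{\mathbb N}$ and back into $[0,1)$, which gives (1), while a composition of bijections is a bijection, which gives (2). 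Concretely, injectivity follows by unwinding $G(x_1)=G(x_2)$ through bijectivity of $\pi_{\mathfrak o}$, injectivity of $\phi$ in each coordinate, and bijectivity of $\pi_{\mathfrak p}$, and surjectivity follows by inverting each factor, using $\phi^{-1}$ coordinatewise.

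For non-monotonicity (3), the key fact is that both $\pi_{\mathfrak p}$ and $\pi_{\mathfrak o}$ respect the lexicographic order on $\mathbb N^{\mathbb N}$: if two sequences first disagree at position $k$ with $n_k^{(1)}<n_k^{(2)}$, the $\pi$-images inherit the same inequality. Under the (implicit) assumption that $\phi$ is not the identity, a minimality argument applied to the least $n$ with $\phi(n)\neq n$ yields $a<b$ with $\phi(a)>\phi(b)$; and since $\phi$ is a bijection of the unbounded set $\mathbb N$, there exist $c<d$ with $\phi(c)<\phi(d)$. The pairs $\pi_{\mathfrak p}(a,1,1,\dots)<\pi_{\mathfrak p}(b,1,1,\dots)$ and $\pi_{\mathfrak p}(c,1,1,\dots)<\pi_{\mathfrak p}(d,1,1,\dots)$ then witness, respectively, order-reversal and order-preservation under $G$, ruling out monotonicity in either direction.

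For continuity (4), I would fix $x_0=\pi_{\mathfrak p}((n_j^{(0)}))$ and, given $\varepsilon>0$, choose $k$ large enough that the length $o_{\phi(n_1^{(0)})}\cdots o_{\phi(n_k^{(0)})}$ of the level-$k$ $\mathfrak o$-cylinder containing $G(x_0)$ falls below $\varepsilon$; this product tends to $0$ by the standard estimate showing that $\prod p_{n_i}\to 0$ on any sequence in $\mathbb N^{\mathbb N}$. Any $x$ whose $\mathfrak p$-representation agrees with $(n_j^{(0)})$ on the first $k$ coordinates then satisfies $|G(x)-G(x_0)|<\varepsilon$, so it suffices to pick $\delta$ small enough that $(x_0-\delta,x_0+\delta)\cap[0,1)$ lies inside the level-$k$ $\mathfrak p$-cylinder containing $x_0$. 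The main obstacle is that this choice of $\delta$ is clean only when $x_0$ lies in the open interior of every level-$k$ $\mathfrak p$-cylinder; at the countably many cylinder endpoints of the form $\widehat{p_{n_1}}+p_{n_1}\cdots p_{n_{k-1}}\widehat{p_{n_k}}$ one must separately match the one-sided $\mathfrak p$-approximations with the corresponding one-sided structure on the $\mathfrak o$-side after pushing through $\Phi$, and this requires a tail-product estimate on both the $\mathfrak p$-side and the $\mathfrak o$-side.
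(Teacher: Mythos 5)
Your factorization $G=\pi_{\mathfrak o}\circ\Phi\circ\pi_{\mathfrak p}^{-1}$ is essentially the paper's implicit strategy, and your items (1)--(3) are in fact more careful than the paper's: for (3) the paper merely observes that $\phi(n_{u+1})<\phi(r_{u+1})$ ``or'' $\phi(n_{u+1})>\phi(r_{u+1})$ may occur, without exhibiting that both cases are realized, whereas your minimality argument actually produces an order-reversing pair --- correctly noting that this needs $\phi\neq\mathrm{id}$, an assumption the paper omits even though its own Example~1 places $y=x$ in the class $G$, contradicting property (3) as stated.

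The genuine problem is the endpoint issue you flag in (4) and then leave open: it is not a technicality you can ``separately match'', it is where the claim actually fails, and the paper's proof silently skips it by assuming that $x_2\to x_1$ forces the length $u$ of the common prefix of the digit sequences to tend to infinity. That implication is false at cylinder endpoints. Concretely, take $\phi(1)=2$, $\phi(2)=1$, $\phi(n)=n$ for $n\ge 3$, and $x_0=p_1=\pi_{\mathfrak p}(2,1,1,\dots)$. Points $x<x_0$ near $x_0$ have representations $(1,N,\dots)$ with $N$ large, so
$$
\lim_{x\to x_0^-}G(x)=\lim_{N\to\infty}\Bigl(\widehat{o_{\phi(1)}}+o_{\phi(1)}\widehat{o_{\phi(N)}}+\cdots\Bigr)=\widehat{o_{2}}+o_{2}=o_1+o_2,
$$
while
$$
G(x_0)=\pi_{\mathfrak o}(1,2,2,2,\dots)=\frac{\widehat{o_2}\,o_1}{1-o_2}=\frac{o_1^2}{1-o_2},
$$
and these differ for generic $\mathfrak o$ (e.g.\ $o_1=o_2=\tfrac14$ gives $\tfrac12$ versus $\tfrac1{12}$). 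The identity $\widehat{o_{m}}+o_{m}=\widehat{o_{m+1}}$, which rescues the order-preserving case $\phi=\mathrm{id}$, is exactly what breaks when $\phi$ scrambles the order. So your proof of (4) cannot be completed as written: left-continuity at the countably many cylinder endpoints genuinely fails for non-monotone $\phi$, and either the lemma needs an additional hypothesis or ``continuous'' must be weakened to continuity off a countable set (equivalently, right-continuity everywhere, which your cylinder argument does establish).
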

\begin{proof}
\emph{The first property} follows from the definition of expansions of the argument and values of the considered functions. 

\emph{The second property}. Since $\pi_{\mathfrak p}$ is bijection and $mathfrak p$ is probability distribution on positive integers, which construct a represetation of real numbers, then  $\pi_{\mathfrak p}$ is an increasing or decreasing function.

Suppose $x_1=\pi_{\mathfrak p}((n_j))$ and $x_2=\pi_{\mathfrak p}((r_j))$ and $n_1< r_1$. Then
$$
x_2-x_1=\widehat{p_{r_1}}+ p_{r_1}\left(\widehat{p_{r_2}}+\sum^{\infty} _{j=2}{\left(\widehat{p_{r_{j+1}}}p_{r_1}p_{r_2} \cdots p_{r_j}\right)}\right) -\widehat{p_{n_1}}+ p_{n_1}\left(\widehat{p_{n_2}}+\sum^{\infty} _{j=2}{\left(\widehat{p_{n_{j+1}}}p_{n_1}p_{n_2} \cdots p_{n_j}\right)}\right) 
$$
$$
=\widehat{p_{r_1}}-\widehat{p_{n_1}}+p_{r_1}\left(\widehat{p_{r_2}}+\sum^{\infty} _{j=2}{\left(\widehat{p_{r_{j+1}}}p_{r_1}p_{r_2} \cdots p_{r_j}\right)}\right) -p_{n_1}\left(\widehat{p_{n_2}}+\sum^{\infty} _{j=2}{\left(\widehat{p_{n_{j+1}}}p_{n_1}p_{n_2} \cdots p_{n_j}\right)}\right) 
$$
$$
=p_{n_1}+p_{n_1+1}+\dots + p_{r_1-1}+p_{r_1}\left(\widehat{p_{r_2}}+\sum^{\infty} _{j=2}{\left(\widehat{p_{r_{j+1}}}p_{r_1}p_{r_2} \cdots p_{r_j}\right)}\right) -p_{n_1}\left(\widehat{p_{n_2}}+\sum^{\infty} _{j=2}{\left(\widehat{p_{n_{j+1}}}p_{n_1}p_{n_2} \cdots p_{n_j}\right)}\right) 
$$
$$
=p_{n_1+1}+\dots + p_{r_1-1}+p_{n_1}\left(1-\widehat{p_{n_2}}-\sum^{\infty} _{j=2}{\left(\widehat{p_{n_{j+1}}}p_{n_1}p_{n_2} \cdots p_{n_j}\right)}\right) +p_{r_1}\left(\widehat{p_{r_2}}+\sum^{\infty} _{j=2}{\left(\widehat{p_{r_{j+1}}}p_{r_1}p_{r_2} \cdots p_{r_j}\right)}\right)>0.
$$

By induction, for any $k\in \mathbb N$ for ${n_u}={r_u}$, where $u=\overline{1, k}$, and $n_{k+1}< r_{k+1}$, we obtain $x_2-x_1>0$.

So, $\pi_{\mathfrak p}$ is an increasing map. 

Since $\pi_{\mathfrak p}$,  $\pi_{\mathfrak o}$, and $\phi$ are  bijections, as well as any $x\in [0, 1)$ has a unique expansion by $\pi_{\mathfrak p}$, then the second property is true.

Let us prove \emph{the third and  fourth properties}. Suppose $x_1<x_2$.  Then there exists $u\in \mathbb N$ such that ${n_i}={r_i}$ holds for  $i=\overline{1, u}$ and  $n_{u+1}< r_{u+1}$. Hence  $\phi(n_i)=\phi(r_i)$ holds for  $i=\overline{1, u}$ but $\phi (n_{u+1})<\phi (r_{u+1})$ or $\phi (n_{u+1})>\phi (r_{u+1})$ as well. 

So, $f$ is not monotonic. 

Now let us consider the difference of the mentioned values of the function. That is, 
$$
G(x_2)-G(x_1)=\pi_{\mathfrak 0}((\phi(r_j)))-\pi_{\mathfrak 0}((\phi(n_j)))=o_{m_1}o_{m_2}\dots o_{m_u}\left(\sigma^u \left(\pi_{\mathfrak 0}((\phi(r_j)))\right)-\sigma^u \left(\pi_{\mathfrak 0}((\phi(n_j)))\right)\right)
$$
$$
< (1-0)p_{m_1}p_{m_2}\dots p_{m_u} \to 0 ~~~(u \to \infty).
$$
Here $\sigma$ is the  shift operator, $0<o_j<1$, and $\sigma^k(\pi_{\mathfrak 0}((n_j))), \sigma^k(\pi_{\mathfrak p}((n_j)))\in [0, 1)$.

Whence
$$
\lim_{u\to \infty}{|G(x_2)-G(x_1)|}=\lim_{u\to \infty}{o_{m_1}o_{m_2}\dots o_{m_u}}=0
$$
and
$$
\lim_{x_2\to x_1}{G(x_2)}=G(x_1).
$$
So, $G$ is a continuous function at any $x\in [0,1)$.
\end{proof}

\begin{theorem}
At the point $x_0=\pi_{\mathfrak p}((n_j))$, the function $G$:
\begin{itemize}
\item is a singular function whenever there exists only a finite subsequence $(n_k)$ of positive integers such that  the inequality ${o_{\phi(n_k)}}\ge {p_{n_k}}$ holds.
\item has an  infinite derivative whenever there exists only a finite subsequence $(n_k)$ of positive integers such that  the inequality ${o_{\phi(n_k)}}\le {p_{n_k}}$ holds.
\item has a finite derivative whenever there exists only a finite subsequence $(n_k)$ of positive integers such that  the inequality ${o_{\phi(n_k)}}\ne {p_{n_k}}$ holds.
\end{itemize}
\end{theorem}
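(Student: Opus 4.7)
The plan is to compute the difference quotient $\frac{G(x)-G(x_0)}{x-x_0}$ by expressing both differences in terms of the shift operator $\sigma$, and then analyze the resulting product factor in each of the three cases.

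Fix $x_0=\pi_{\mathfrak p}((n_j))$ and take a nearby $x=\pi_{\mathfrak p}((n'_j))$ that agrees with $x_0$ in the first $k$ coordinates but differs at coordinate $k+1$. Iterating the recursion $x=\widehat{p_{n_1}}+p_{n_1}\sigma(x)$ gives
\[
x-x_0 = p_{n_1}p_{n_2}\cdots p_{n_k}\bigl(\sigma^k(x)-\sigma^k(x_0)\bigr),
\]
and analogously, applied to the $\mathfrak o$-expansions of $G(x)$ and $G(x_0)$,
\[
G(x)-G(x_0) = o_{m_1}o_{m_2}\cdots o_{m_k}\bigl(\sigma^k(G(x))-\sigma^k(G(x_0))\bigr).
\]
Dividing yields the key identity
\[
\frac{G(x)-G(x_0)}{x-x_0} \;=\; P_k\cdot Q_k, \qquad P_k:=\prod_{j=1}^{k}\frac{o_{m_j}}{p_{n_j}},\quad Q_k:=\frac{\sigma^k(G(x))-\sigma^k(G(x_0))}{\sigma^k(x)-\sigma^k(x_0)}.
\]
Since $x\to x_0$ forces $k\to\infty$, the derivative behaviour at $x_0$ is governed by $\lim_k P_k$, provided the cofactor $Q_k$ stays bounded away from $0$ and $\infty$.

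Next I would verify that $Q_k$ is uniformly controlled. Both shifted arguments lie in $[0,1)$ and their first coordinates differ (they are exactly $n_{k+1}$ vs.\ $n'_{k+1}$ in the $\mathfrak p$-case, and the $\phi$-images in the $\mathfrak o$-case), so $|\sigma^k(x)-\sigma^k(x_0)|$ and $|\sigma^k(G(x))-\sigma^k(G(x_0))|$ are bounded above by $1$ and bounded below by explicit cylinder lengths determined by the ``first digit of disagreement''. With these bounds in hand, the three cases follow from the asymptotics of the product $P_k$: if $o_{m_j}<p_{n_j}$ for all but finitely many $j$, then $P_k\to 0$ and hence $G'(x_0)=0$, i.e.\ $G$ is \emph{singular} at $x_0$; if $o_{m_j}>p_{n_j}$ for all but finitely many $j$, then $P_k\to\infty$ and the derivative is infinite; and if $o_{m_j}=p_{n_j}$ for all but finitely many $j$, then $P_k$ is eventually constant and equal to the finite nonzero number $C=\prod_{j:\,o_{m_j}\neq p_{n_j}}(o_{m_j}/p_{n_j})$, giving $G'(x_0)=C\cdot\lim_k Q_k$.

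The main obstacle is twofold. First, in the finite derivative case one must show not only that $P_k$ stabilises but that $Q_k$ itself converges; this is handled by observing that once $o_{m_j}=p_{n_j}$ beyond some index $N$, the geometry of the $\mathfrak p$- and $\mathfrak o$-cylinders is identical on the tail, so $Q_k$ reduces to a ratio of differences governed by the same cylinder lengths and converges. Second, in the singular and infinite derivative cases, the conclusions $P_k\to 0$ and $P_k\to\infty$ require the strict inequalities to be \emph{effective}, i.e.\ the ratios $o_{m_j}/p_{n_j}$ must be bounded away from $1$ on the tail (or one must supplement the hypothesis with a suitable summability condition on $|o_{m_j}-p_{n_j}|$). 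Making this implicit uniform gap explicit is the subtlest step, after which the limit $x\to x_0$ yields the claimed behaviour of $G$ at $x_0$.
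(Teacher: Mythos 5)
Your overall strategy is the same as the paper's: reduce the difference quotient to the product $P_k=\prod_{j\le k} o_{m_j}/p_{n_j}$ times a residual factor, and read off the three cases from the asymptotics of $P_k$. (The paper does this by evaluating the increment of $G$ over the nested rank-$k$ cylinders $[x_k,x_{k+1})$ containing $x_0$, for which the residual factor is computed exactly; you instead work with an arbitrary nearby point $x$ and the factor $Q_k$.) You also correctly flag a real weakness of the statement itself: ``$o_{m_j}<p_{n_j}$ for all but finitely many $j$'' does not by itself force $P_k\to 0$, since the ratios may tend to $1$ fast enough that the infinite product converges to a positive number; some uniform gap or summability hypothesis is needed. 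That observation is a genuine improvement on the paper, which silently passes from ``the derivative depends on $\lim_k P_k$'' to the three conclusions.

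However, there is a concrete gap in your control of $Q_k$. You assert that $|\sigma^k(x)-\sigma^k(x_0)|$ and $|\sigma^k(G(x))-\sigma^k(G(x_0))|$ are ``bounded below by explicit cylinder lengths determined by the first digit of disagreement.'' This is false: the first-rank cylinders $[\widehat{p_{n}},\widehat{p_{n+1}})$ are adjacent, so two points whose expansions differ already in the first digit can be arbitrarily close (one near the right end of its cylinder, the other near the left end of the next). Hence $|\sigma^k(x)-\sigma^k(x_0)|$ has no positive lower bound depending only on the disagreeing digits, and $Q_k$ is not bounded away from $0$ and $\infty$; it can oscillate independently of $P_k$, so the identity $\frac{G(x)-G(x_0)}{x-x_0}=P_kQ_k$ does not by itself yield the limit. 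For monotone Salem-type functions one repairs this by sandwiching the difference quotient between cylinder-endpoint quotients, but $G$ here is not monotone, so that device is unavailable; one must either restrict to the cylinder increments as the paper does (which proves convergence only along that particular family of intervals, a strictly weaker statement than existence of the derivative) or carry out a finer analysis of the relative position of $x$ and $x_0$ inside their common $\mathfrak p$-cylinder versus the position of $G(x)$ and $G(x_0)$ inside the corresponding $\mathfrak o$-cylinder. As written, the step ``$Q_k$ is uniformly controlled'' is the missing idea, and without it the three conclusions do not follow.
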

\begin{proof}
Let us consider intervals of the forms (auxiliary analogy is also used for  sets  in \cite{JN2022}):
$$
[x_k, x_{k+1}):=\left[\pi_{\mathfrak p}((n_1, n_2, \dots , n_{k-1}, n_k, 1, 1, 1, \dots )), \pi_{\mathfrak p}((n_1, n_2, \dots , n_{k-1}, n_k+1, 1, 1, 1, \dots ))\right)
$$
We have
$$
x_{k+1}-x_k=p_{n_1}p_{n_2}\dots p_{n_{k-1}}\left(\widehat{p_{n_{k}+1}}-\widehat{p_{n_k}}+(p_{n_{k}+1}-p_{n_k})G((1))\right)=p_{n_1}p_{n_2}\dots p_{n_{k-1}}p_{n_{k}}.
$$
Then
$$
G(x_{k+1})-G(x_k)=o_{m_1}o_{m_2}\dots o_{m_{k-1}}\left(\widehat{o_{m_{k}+1}}-\widehat{o_{m_k}}+(o_{m_{k}+1}-o_{m_k})G((1))\right).
$$

Whence,
$$
\lim_{k \to \infty}{\frac{G(x_{k+1})-G(x_k)}{x_{k+1}-x_k}}=\lim_{k \to \infty}{{o_{m_1}o_{m_2}\dots o_{m_{k-1}}\left(\widehat{o_{m_{k}+1}}-\widehat{o_{m_k}}+(o_{m_{k}+1}-o_{m_k})G((1))\right)} }{ p_{n_1}p_{n_2}\dots p_{n_{k-1}}p_{n_{k}}}.
$$
So,  the derivative depends on the limit
$$
\lim_{k \to \infty}\frac{{o_{m_1}o_{m_2}\dots o_{m_{k}}}}{ p_{n_1}p_{n_2}\dots p_{n_{k-1}}p_{n_{k}}}.
$$

Suppose   $\sharp_j(x, k)$ is the number of the digit $j$ in the $k$ first digits of the $\pi_{\mathfrak p}$ representation of $x$. Then  
$$
\lim_{k \to \infty}\frac{{o_{m_1}o_{m_2}\dots o_{m_{k}}}}{ p_{n_1}p_{n_2}\dots p_{n_{k-1}}p_{n_{k}}}=\lim_{k \to \infty}\frac{{o_{\phi(n_1)}o_{\phi(n_2)}\dots o_{\phi(n_k)}}}{ p_{n_1}p_{n_2}\dots p_{n_{k-1}}p_{n_{k}}}=\lim_{k\to\infty} \left(\frac{o^{\sharp_1(x, k)} _{\phi(1)} o^{\sharp_2(x, k)} _{\phi(2)} \cdots  o^{\sharp_{j}(x,k)} _{\phi (j)} \cdots }{p^{\sharp_1(x, k)} _1 p^{\sharp_2(x, k)} _2 \cdots p^{\sharp_{j}(x, k)} _{j}\cdots}  \right).
$$

So,  coordinates of the probability vectors $\mathfrak p$ and $\mathfrak o$, as well as the rule $\phi$ are key parameters of the existence of the derivative. Moreover, the main parameter is following:
$$
\frac{o_{\phi(j)}}{p_j}.
$$
\end{proof}

\begin{corollary} It follows that in the class of the presented functions, there exist singular functions  $G$  on $[0, 1)$ such that the functions
$$
G^{-1}: \pi_{\mathfrak o}((\phi(n_j))) \to \pi_{\mathfrak p}((n_j))
$$
have an infinite derivative  almost everywhere on~$[0, 1)$ and vice versa (there exist such functions $G$ with an infinite derivative almost everywhere that $G^{-1}$ are singular functions). 
\end{corollary}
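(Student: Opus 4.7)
The strategy is to apply the preceding theorem twice — once to $G$ and once to $G^{-1}$ — and then to use a strong-law-of-large-numbers (SLLN) argument to promote the theorem's pointwise criteria to almost-everywhere statements. The first step is the algebraic observation that $G^{-1}$ sends $\pi_{\mathfrak{o}}((m_j))$ to $\pi_{\mathfrak{p}}((\phi^{-1}(m_j)))$ and is therefore itself of the form studied in this paper, with the triple $(\mathfrak{p},\mathfrak{o},\phi)$ replaced by $(\mathfrak{o},\mathfrak{p},\phi^{-1})$ and key parameter $p_{\phi^{-1}(m)}/o_m$. A direct comparison of the two applications then shows that, at matched points $x_0=\pi_{\mathfrak{p}}((n_j))$ and $y_0=G(x_0)=\pi_{\mathfrak{o}}((\phi(n_j)))$, the theorem's sufficient condition for $G$ to be singular at $x_0$ and its sufficient condition for $G^{-1}$ to have an infinite derivative at $y_0$ reduce to one and the same inequality, namely $o_{\phi(n_k)}<p_{n_k}$ for all but finitely many $k$.

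Next I would realize this inequality on a Lebesgue-full set of $x$'s. The push-forward of the product measure $\mathfrak{p}^{\mathbb{N}}$ under $\pi_{\mathfrak{p}}$ coincides with Lebesgue measure on $[0,1)$, so that the digits $n_j=n_j(x)$ of a Lebesgue-random $x$ are i.i.d.\ with law $\mathfrak{p}$. Choosing $\mathfrak{p},\mathfrak{o},\phi$ so that $o_{\phi(i)}\ne p_i$ for sufficiently many $i$, the expectation $\sum_i p_i\log(o_{\phi(i)}/p_i)$ is strictly negative, and the SLLN forces $\prod_{j=1}^{k}o_{\phi(n_j)}/p_{n_j}\to 0$ for Lebesgue-a.e.\ $x$. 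The theorem then yields that $G$ is singular on a set of full Lebesgue measure, and by the pointwise duality from the first step $G^{-1}$ has an infinite derivative at every matched point $y=G(x)$ in that set. Running the symmetric SLLN on the $y$-axis — using that under Lebesgue measure the digits $m_j=m_j(y)$ are i.i.d.\ with law $\mathfrak{o}$ — lifts this to an almost-everywhere statement on $[0,1)$. The ``vice versa'' claim follows by interchanging the roles of $\mathfrak{p}$ and $\mathfrak{o}$.

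The principal obstacle is the measure-theoretic consistency between the two axes: the full-measure set of $x$'s produced by the SLLN on the $x$-side need not be sent by $G$ to a full-measure set of $y$'s, precisely because $G$ is singular. Overcoming this forces one to run the SLLN separately in each coordinate system and verify that the sign of the relevant Kullback--Leibler-type expectation works out in both, which is a joint constraint on $(\mathfrak{p},\mathfrak{o},\phi)$. Once the parameters are tuned so that both ``almost everywhere'' claims hold simultaneously, the remainder of the proof is a direct appeal to the theorem and to the bijectivity of $G$.
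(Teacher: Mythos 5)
Your proposal diverges substantially from the paper, which in fact offers no written proof of the corollary: it simply relies on the pointwise duality implicit in the theorem (the criterion for $G$ to be singular at $x_0=\pi_{\mathfrak p}((n_j))$ and the criterion for $G^{-1}$ to have an infinite derivative at $y_0=G(x_0)$ are both the inequality $o_{\phi(n_k)}<p_{n_k}$ for all but finitely many $k$, since the key ratio for $G^{-1}$ is the reciprocal $p_{n_k}/o_{\phi(n_k)}$), and then exhibits one concrete triple $(\mathfrak p,\mathfrak o,\phi)$ for which the relevant ratio is claimed to exceed $1$ for every digit. Your first step --- recognizing $G^{-1}$ as a member of the same class with parameters $(\mathfrak o,\mathfrak p,\phi^{-1})$ and matching the two pointwise criteria --- is exactly this (unstated) reasoning and is correct.

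The SLLN half of your argument, however, contains a fatal gap that you flag but do not resolve. You correctly observe that the digits $n_j$ of a Lebesgue-random $x$ are i.i.d.\ with law $\mathfrak p$, and that $\frac1k\sum_{j\le k}\log\bigl(o_{\phi(n_j)}/p_{n_j}\bigr)\to\sum_i p_i\log\bigl(o_{\phi(i)}/p_i\bigr)$, which is the negative Kullback--Leibler divergence of $\mathfrak p$ from the distribution $i\mapsto o_{\phi(i)}$ and hence strictly negative whenever the two distributions differ. But the symmetric computation on the $y$-axis gives $\sum_m o_m\log\bigl(p_{\phi^{-1}(m)}/o_m\bigr)$, which is again a negative Kullback--Leibler divergence and therefore also strictly negative --- never positive. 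So the ``joint constraint'' you defer to a final tuning step is unsatisfiable: for every admissible $(\mathfrak p,\mathfrak o,\phi)$ your own SLLN shows that both $G$ and $G^{-1}$ are singular (in the product-ratio sense) Lebesgue-almost everywhere on their respective axes, and neither has an infinite derivative almost everywhere. This is the expected reflection of the classical fact that the inverse of a singular homeomorphism is again singular. The infinite-derivative conclusion for $G^{-1}$ is available only at points $y=G(x)$ for Lebesgue-a.e.\ $x$, i.e.\ almost everywhere with respect to the pushforward of Lebesgue measure under $G$, which is singular to Lebesgue measure; the corollary as literally stated (``a.e.\ on $[0,1)$'') therefore does not follow from your argument. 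Note finally that the theorem's hypothesis is termwise ($o_{\phi(n_k)}<p_{n_k}$ for cofinitely many $k$), not the product condition you substitute for it; since a.e.\ $x$ contains every digit value infinitely often and $\sum_i o_{\phi(i)}=\sum_i p_i=1$ forbids $o_{\phi(i)}<p_i$ for all $i$, that termwise hypothesis likewise cannot hold on a set of full Lebesgue measure, a difficulty the paper's own example does not escape either.
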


\begin{example}
Suppose 
$$
{\mathfrak p}=\left(\frac{1}{2}, \frac{1}{4}, \frac{1}{8}, \dots, \frac{1}{2^j}, \dots \right),
$$
$$
{\mathfrak o}=\left(\frac{1}{3}, \frac{2}{9}, \frac{4}{27}, \dots, \frac{2^{j-1}}{3^j}, \dots \right),
$$
and
$$
\phi(n_j)=\begin{cases}
n_j+1 &\text{whenever $n_j$ is odd}\\
n_j-1 &\text{whenever $n_j$ is even.}
\end{cases}
$$
That is, 
$$
G: \begin{cases}
p_{n_{2t-1}}=\frac{1}{2^{2t-1}} \to o_{\phi(n_{2t-1})}=\frac{2^{2t-1}}{3^{2t}} \\
p_{n_{2t}}=\frac{1}{2^{2t}} \to o_{\phi(n_{2t})}=\frac{2^{2t-2}}{3^{2t-1}},
\end{cases}
$$
where $t=1, 2, 3, \dots $.

Since 
$$
\frac{o_{\phi(n_{2t-1})}}{p_{n_{2t-1}}}=\frac{\frac{2^{2t-1}}{3^{2t}}}{\frac{1}{2^{2t-1}}}=\frac{2^{4t-2}}{3^{2t}}>1~~~\text{and}~~~\frac{o_{\phi(n_{2t})}}{p_{n_{2t}}}=\frac{\frac{2^{2t-2}}{3^{2t-1}}}{\frac{1}{2^{2t}} }=\frac{2^{4t-2}}{3^{2t-1}}>1.
$$

So, $G$ has an infinite derivative almost everywhere on $[0, 1)$, but $G^{-1}$ is singular on $[0, 1)$.
\end{example}

\begin{lemma}  A   system of functional equations of the form
\begin{equation*}
f\left(\sigma^{k-1}(x)\right)=\widehat{o_{m_k}}+o_{m_k}f\left(\sigma^k(x)\right),
\end{equation*}
has the unique solution
$$
G(x)=\widehat{o_{n_1}}+ \sum^{\infty} _{j=1}{\left(\widehat{o_{n_{j+1}}}o_{n_1}o_{n_2} \cdots o_{n_j}\right)}
$$
in the class of determined and bounded on $[0, 1)$ functions. Here $x$ represented by expansion \eqref{eq} induced by the probability distribution on $\mathbb N$, $i_n\in \mathbb N$, and $k=1,2, \dots$, as well as $\sigma$ is the shift operator with $\sigma^0(x)=x$.
\end{lemma}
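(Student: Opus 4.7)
The plan is standard for iterated functional equations of this flavour: verify that the proposed $G$ solves the system by direct substitution, then prove uniqueness by iterating the equation $k$ times and showing that the resulting tail of weighted factors vanishes on every bounded solution.

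For existence, for any $x = \pi_{\mathfrak p}((n_j))$, the shift gives $\sigma^{k-1}(x) = \pi_{\mathfrak p}((n_k, n_{k+1}, \dots))$, so the definition of $G$ (written consistently with the system as $\widehat{o_{m_1}} + \sum_{j\ge 1} \widehat{o_{m_{j+1}}}\,o_{m_1}\cdots o_{m_j}$) yields
$$G(\sigma^{k-1}(x)) = \pi_{\mathfrak o}((m_k, m_{k+1}, \dots)) = \widehat{o_{m_k}} + o_{m_k}\,\pi_{\mathfrak o}((m_{k+1}, m_{k+2}, \dots)) = \widehat{o_{m_k}} + o_{m_k}\, G(\sigma^k(x)),$$
where the middle equality is just expansion \eqref{eq} for $\pi_{\mathfrak o}$ after factoring out the first summand. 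Hence $G$ satisfies the system; this is essentially the same recursive identity already exploited when proving continuity in the previous lemma.

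For uniqueness, let $f$ be any function on $[0,1)$ satisfying the system and bounded by some constant $M$. Iterating $k$ times produces
$$f(x) = \widehat{o_{m_1}} + \sum_{j=1}^{k-1} \widehat{o_{m_{j+1}}}\, o_{m_1} o_{m_2}\cdots o_{m_j} + o_{m_1} o_{m_2}\cdots o_{m_k}\, f(\sigma^k(x)).$$
The explicit terms coincide with the $k$th partial sum of $G(x)$, while the remainder is dominated by $M \prod_{i=1}^k o_{m_i}$. Conclusion therefore reduces to showing that this product tends to $0$ as $k \to \infty$.

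The only real obstacle is to establish this tail decay uniformly in the admissible digit sequence $(m_j)$. Since $\mathfrak o$ is a probability distribution on $\mathbb N$ with all $o_j \in (0,1)$, at most one $o_j$ can exceed $1/2$; a short accounting argument then shows that $\alpha := \sup_{j\in\mathbb N} o_j$ is attained at some index and satisfies $\alpha < 1$ (otherwise infinitely many $o_j$ would accumulate near $1$, contradicting $\sum_j o_j = 1$). Consequently $\prod_{i=1}^k o_{m_i} \le \alpha^k \to 0$, the remainder in the iterated identity vanishes, and passing to the limit gives $f(x) = G(x)$ for every $x \in [0,1)$, completing the proof of uniqueness.
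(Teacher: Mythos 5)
Your proof is correct, and it is more than the paper offers: the paper gives no argument at all for this lemma, only the remark that it ``can be proven by analogy with'' the reference \cite{Symon2024}. Your two-step scheme (verify $G$ by direct substitution using $\sigma^{k-1}(x)=\pi_{\mathfrak p}((n_k,n_{k+1},\dots))$ and the recursion $\pi_{\mathfrak o}((m_k,m_{k+1},\dots))=\widehat{o_{m_k}}+o_{m_k}\pi_{\mathfrak o}((m_{k+1},\dots))$; then iterate the system on an arbitrary bounded solution and kill the remainder) is exactly the standard argument used in that line of work, so you have in effect supplied the missing proof rather than deviated from it. The one step that genuinely needs care on an infinite alphabet is the uniform decay of the remainder $M\prod_{i=1}^{k}o_{m_i}$, since the naive bound ``each factor is less than $1$'' does not suffice when the digits $m_i$ range over all of $\mathbb N$; your observation that $o_j\to 0$ (as terms of a convergent series), hence $\alpha=\sup_j o_j$ is attained and $\alpha<1$, closes this correctly and gives the uniform bound $\alpha^k$. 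Two small editorial points: the lemma as printed writes the solution with indices $n_j$ where consistency with the functional equations requires $m_j=\phi(n_j)$, and you are right to read it that way; and strictly speaking uniqueness also uses that every point $y\in[0,1)$ occurs as $\sigma^{0}(x)$ for some admissible $x$ (namely $x=y$), so the iteration argument does determine $f$ everywhere --- a one-line remark worth adding.
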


The statement can be proven by analogy with \cite{Symon2024}.

\begin{theorem}
For the Lebesgue integral, the following equality holds:
$$
\int^1 _0 {G(x)dx}=\frac{\sum^{\infty}_{j=1}{\widehat{o_{\phi(j)}}p_j}}{1-\sum^{\infty} _{j=1}{o_{\phi(j)}p_j}}.
$$
\end{theorem}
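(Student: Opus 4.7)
The plan is to exploit the self-similar functional equation for $G$ from the previous lemma, partition $[0,1)$ according to the first digit of the $\pi_{\mathfrak p}$-expansion, and derive a linear equation for the integral $I := \int_0^1 G(x)\,dx$.

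First, I would note that for each fixed $n_1 \in \mathbb N$, the set of $x \in [0,1)$ whose $\pi_{\mathfrak p}$-expansion starts with $n_1$ is the half-open interval $\bigl[\widehat{p_{n_1}}, \widehat{p_{n_1}} + p_{n_1}\bigr)$ (this follows from monotonicity of $\pi_{\mathfrak p}$ established in the proof of the lemma on properties of $G$). The substitution $x = \widehat{p_{n_1}} + p_{n_1} t$ then sends $t \in [0,1)$ bijectively onto this interval with $dx = p_{n_1}\,dt$ and $\sigma(x) = t$. Combined with the functional relation $G(x) = \widehat{o_{\phi(n_1)}} + o_{\phi(n_1)} G(\sigma(x))$, this yields
$$
\int_{\widehat{p_{n_1}}}^{\widehat{p_{n_1}}+p_{n_1}} G(x)\,dx = p_{n_1}\widehat{o_{\phi(n_1)}} + p_{n_1} o_{\phi(n_1)} \int_0^1 G(t)\,dt = p_{n_1}\widehat{o_{\phi(n_1)}} + p_{n_1} o_{\phi(n_1)} I.
$$

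Next I would sum over $n_1 \in \mathbb N$. Since $G$ is continuous and bounded on $[0,1)$ (values lie in $[0,1)$), the integral over $[0,1)$ equals the sum of the integrals over this countable disjoint partition, and all terms are nonnegative, so monotone convergence justifies interchanging the sum with the integral. Renaming the summation index $j$, this gives
$$
I = \sum_{j=1}^{\infty} p_j \widehat{o_{\phi(j)}} + I \sum_{j=1}^{\infty} p_j o_{\phi(j)}.
$$
Solving for $I$ produces the claimed formula, provided that $\sum_{j=1}^{\infty} o_{\phi(j)} p_j < 1$; this inequality holds because $o_{\phi(j)}, p_j \in (0,1)$ implies $o_{\phi(j)} p_j < p_j$, so the series is strictly dominated by $\sum p_j = 1$.

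The main obstacle is bookkeeping rather than analysis: one must be careful that the partition $\{[\widehat{p_{n_1}}, \widehat{p_{n_1}}+p_{n_1})\}_{n_1\in\mathbb N}$ really covers $[0,1)$ up to a set of measure zero (it actually covers it exactly, by the bijectivity of $\pi_{\mathfrak p}$ cited from \cite{JN2022}), and that the functional equation for $G$ is valid on each piece — both of which reduce to the representation formulas already recorded in the excerpt. Once these are in place, the derivation is purely algebraic.
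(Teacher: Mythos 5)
Your proposal is correct and follows essentially the same route as the paper: partition $[0,1)$ into the first-digit intervals $[\widehat{p_j},\widehat{p_{j+1}})=[\widehat{p_j},\widehat{p_j}+p_j)$, apply the functional equation $G(x)=\widehat{o_{\phi(j)}}+o_{\phi(j)}G(\sigma(x))$ together with the change of variables $dx=p_j\,d(\sigma(x))$, and solve the resulting linear equation for $I$. Your added observation that $\sum_j o_{\phi(j)}p_j<\sum_j p_j=1$ (so the denominator is nonzero) is a small but worthwhile justification the paper leaves implicit.
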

\begin{proof}
 Let us begin with some useful equalities:
$$
x=\widehat{p_{n_1}}+p_{n_1}\sigma(x)
$$
and
$$
dx=p_{n_1}d(\sigma(x)),
$$
as well as
$$
d(\sigma^{k-1} (x))=p_{n_k}d(\sigma^{k} (x))
$$
for all $k=1, 2, 3, \dots$ . 

So,
$$
I:=\int^1 _0{G(x)dx}=\sum^{\infty} _{j=1}{\int^{\widehat{p_{j+1}}} _{\widehat{p_{j}}}{G(x)dx}}=\sum^{\infty} _{j=1}{\int^{\widehat{p_{j+1}}} _{\widehat{p_{j}}}{(\widehat{o_{\phi(j)}}+o_{\phi(j)}G(\sigma(x))}dx}
$$
$$
=\sum^{\infty} _{j=1}{\widehat{o_{\phi(j)}}p_{j}} +
 \sum^{\infty} _{j=1}{\int^{\widehat{p_{j+1}}} _{\widehat{p_{j}}}{o_{\phi(j)}G(\sigma(x))dx}}
$$
$$
=\sum^{\infty} _{j=1}{\widehat{o_{\phi(j)}}p_{j}} +
\sum^{\infty} _{j=1}{o_{\phi(j)}\int^{\widehat{p_{j+1}}} _{\widehat{p_{j}}}{p_jG(\sigma(x))d(\sigma(x))}}
$$
$$
=\sum^{\infty} _{j=1}{\widehat{o_{\phi(j)}}p_{j}} +
\sum^{\infty} _{j=1}{o_{\phi(j)}p_j\int^{\widehat{p_{j+1}}} _{\widehat{p_{j}}}{G(\sigma(x))d(\sigma(x))}}.
$$

Using self-affine properties of $f$, we get
$$
I=\sum^{\infty} _{j=1}{\widehat{o_{\phi(j)}}p_{j}}+I\sum^{\infty} _{j=1}{o_{\phi(j)}p_j}.
$$

Finally,
$$
I=\frac{\sum^{\infty} _{j=1}{\widehat{o_{\phi(j)}}p_{j}}}{1-\sum^{\infty} _{j=1}{o_{\phi(j)}p_j}}.
$$
\end{proof}

\section*{Statements and Declarations}
\begin{center}
{\bf{Competing Interests}}

\emph{The author states that there is no conflict of interest.}
\end{center}

\begin{center}
{\bf{Data Availability Statement}}

\emph{There are not suitable for this research.}
\end{center}

\end{document}